\theoremstyle{plain}
\newtheorem{theorem}{Theorem}[section]
\newtheorem{corollary}[theorem]{Corollary}
\newtheorem{lemma}[theorem]{Lemma}
\newtheorem{proposition}[theorem]{Proposition}
\theoremstyle{definition}
\theoremstyle{definition}
\def\Ric{\operatorname{Ric}}
\def\R{\mathbb{R}}
\def\div{\operatorname{div}}
\title{The case of equality for the spacetime positive mass theorem}
\date{\today}
\author{Sven Hirsch}
\address{Department of Mathematics, Duke University, Durham, NC, 27708, USA}
\email{sven.hirsch@duke.edu}
\author{Yiyue Zhang}
\address{Department of Mathematics, University of California, Irvine, CA, 92697, USA}
\email{yiyuez4@uci.edu}
\begin{document}

\maketitle

\begin{abstract}
The rigidity of the spacetime positive mass theorem states that an initial data set $(M,g,k)$ satisfying the dominant energy condition with vanishing mass can be isometrically embedded into Minkowski space.
This has been established by Beig-Chru\'sciel and Huang-Lee under additional decay assumptions for the energy and momentum densities $\mu$ and $J$.
In this note we give a new and elementary proof in dimension 3 which removes these additional decay assumptions.
Our argument uses spacetime harmonic functions and Liouville's theorem.
We also provide an alternative proof based on the Killing development of $(M,g,k)$.
\end{abstract}

\section{Introduction}
One of the central objects studied in general relativity are isolated gravitational systems such as stars, black holes and galaxies. Mathematically, they are modeled by asymptotically flat initial data sets (IDS) which are triples $(M,g,k)$ consisting of an asymptotically flat, complete, smooth Riemannian 3-manifold $(M,g)$ together with a smooth, symmetric two-tensor $k$.

\

More precisely, $(M,g)$ contains a compact set $\mathcal{C}\subset M$ such that we can write $M\setminus \mathcal{C}=\cup_{\ell=1}^{\ell_0}M_{end}^{\ell}$ where the ends $M_{end}^\ell$ are pairwise disjoint and diffeomorphic to the complement of a ball $\mathbb{R}^3 \setminus B_1$.
Furthermore, there exists  a coordinate system in each end  satisfying
\begin{equation}\label{asymflat}
|\partial^l (g_{ij}-\delta_{ij})(x)|=O(|x|^{-\tau-l}),\quad l=0,1,2,\quad\quad
|\partial^l k_{ij}(x)|=O(|x|^{-\tau-1-l}),\quad l=0,1,
\end{equation}
for some $\tau>\tfrac{1}{2}$.
To each initial data set $(M,g,k)$ we associate the energy density $\mu$ and the momentum density $J$ defined by
\begin{align}
    \mu=\frac12(R+(\text{Tr}_g k)^2-|k|^2),\quad\quad J=\div_g(k-\text{Tr}_gkg)
\end{align}
where $R$ is the scalar curvature of $g$.
Moreover, we define the ADM energy $E$ and linear momentum $P$ by
\begin{equation}\label{EP definition}
E=\lim_{r\rightarrow\infty}\frac{1}{16\pi}\int_{S_{r}}\sum_i \left(g_{ij,i}-g_{ii,j}\right)\upsilon^j dA,\quad\quad
P_i=\lim_{r\rightarrow\infty}\frac{1}{8\pi}\int_{S_{r}} \left(k_{ij}-(\mathrm{Tr}_g k)g_{ij}\right)\upsilon^j dA
\end{equation}
where $\upsilon$ is the outer unit normal to the sphere $S_r$ and $dA$ is its area element.
In order to ensure that $E$ and $P$ are well-defined in equation \eqref{EP definition}, we impose additionally $\mu,J\in L^1(M)$, and throughout this paper we assume that $g\in C^{2,\alpha}(M)$ and $k\in C^{1,\alpha}(M)$. 

\

A fundamental results about initial data sets is the positive mass theorem (PMT):
\begin{theorem}
Suppose $(M,g,k)$ is a complete  asymptotically flat initial data set satisfying the dominant energy condition (DEC) $\mu\ge|J|$.
Then $E\ge|P|$.
\end{theorem}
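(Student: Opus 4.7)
The plan is to adapt the spacetime harmonic function technique of Hirsch-Kazaras-Khuri-Zhang, which gives a short proof of $E \geq |P|$ in dimension $3$. The theorem reduces to a direction-by-direction statement: for each unit vector $\nu \in S^2$ I would construct a function $u \in C^{2,\alpha}(M)$ satisfying an appropriate spacetime Laplace equation coupling $\Delta_g u$ to $k$ and $\nabla u$, with linear asymptotics $u(x) \sim \nu \cdot x$ in a distinguished end and bounded asymptotics in the remaining ends, and then read off $E - \nu \cdot P \geq 0$ from a boundary integral identity. Taking $\nu = P/|P|$ then yields $E \geq |P|$.

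For existence, I would solve the elliptic equation on a compact exhaustion $\{\Omega_j\}$ with Dirichlet data approaching $\nu \cdot x$ on $\partial \Omega_j \cap M_{end}^1$ and suitable constants on components of $\partial \Omega_j$ meeting the other ends. The lower-order coupling to $k$ is controlled by the decay \eqref{asymflat}; together with standard $C^{2,\alpha}$ a priori estimates this yields a global subsequential limit $u\colon M \to \R$ with an asymptotic expansion of the form
\begin{equation*}
u(x) = \nu \cdot x + \beta \log |x| + O(|x|^{-\tau})
\end{equation*}
in $M_{end}^1$ for some constant $\beta$, with bounded expansions in the other ends.

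The analytic core is a Bochner-type identity. Computing $\Delta_g \bigl(\tfrac{1}{2}|\nabla u|^2\bigr)$, substituting the spacetime Laplace equation, completing the square using the symmetric $2$-tensor $\nabla^2 u + |\nabla u|\,k$, and replacing Ricci terms via the constraint expressions for $\mu$ and $J$, one obtains pointwise on $\{|\nabla u| > 0\}$ an inequality of the schematic form
\begin{equation*}
\Delta_g |\nabla u| \;\geq\; (\mu - |J|)\,|\nabla u| \;+\; \text{(non-negative square terms)}.
\end{equation*}
Integrating this over a large coordinate ball in $M_{end}^1$ and applying the divergence theorem, the boundary flux at infinity can be evaluated from the asymptotic profile of $u$ and seen to equal exactly $16\pi(E - \nu \cdot P)$ plus a non-negative contribution from $\beta$. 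Together with DEC $\mu \geq |J|$ this forces $E \geq \nu \cdot P$.

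The main obstacle is the PDE step. The equation is quasilinear and degenerate where $\nabla u = 0$; global solvability with prescribed linear growth on a complete non-compact manifold possibly having multiple ends requires barrier arguments and uniform a priori estimates, and the asymptotic expansion must be refined to \emph{precisely} the order at which the boundary integral captures $-\nu \cdot P$ rather than just $E$. Once these analytic facts are in hand, the Bochner identity and the DEC make the inequality essentially immediate, and it is this existence and asymptotic analysis that carries the entire technical weight of the proof.
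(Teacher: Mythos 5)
The paper does not give an independent proof of this theorem; it cites the known proofs (Schoen--Yau, Witten, Eichmair, Eichmair--Huang--Lee--Schoen, Hirsch--Kazaras--Khuri) and, for its own purposes, imports the Hirsch--Kazaras--Khuri (HKK) integral formula \eqref{integral formula} from which the inequality $E\ge|P|$ follows immediately under the DEC. Your sketch is evidently aimed at reproducing the HKK argument. The overall strategy --- a spacetime-harmonic function $u$ with $\Delta u=-\mathrm{Tr}_g k\,|\nabla u|$, asymptotically linear in a direction $\nu$ chosen so that the boundary term returns $E-|P|$, a Bochner manipulation, and DEC --- is in the right spirit.

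There is, however, a genuine gap in the analytic core. The pointwise inequality you write,
\begin{equation*}
\Delta_g|\nabla u|\;\ge\;(\mu-|J|)\,|\nabla u|+\text{(non-negative squares)},
\end{equation*}
is not correct. The actual Stern/HKK identity produces, alongside the desired squares and the $\mu,J$ terms, a contribution $-K_{\Sigma_t}$ from the intrinsic Gaussian curvature of the level sets, and this term has no pointwise sign. The inequality therefore does not close over a large coordinate ball by the divergence theorem alone. What rescues the argument in \cite{HKK} (and \cite{BKKS} in the $k=0$ case) is a level-set decomposition: one uses the co-area formula to convert $\int K_{\Sigma_t}|\nabla u|$ into $\int dt\int_{\Sigma_t}K_{\Sigma_t}$, and then Gauss--Bonnet on the two-dimensional level sets to replace $\int_{\Sigma_t}K_{\Sigma_t}$ by $2\pi\chi(\Sigma_t)$. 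Controlling $\chi(\Sigma_t)$ in turn requires passing to the exterior region $M_{ext}$ (constructed in \cite{HKK}) where regular level sets of $u$ are connected spheres. This Gauss--Bonnet/topological ingredient is where the constant $16\pi$ comes from and is exactly what your sketch is missing; ``integrating over a large ball and applying the divergence theorem'' without it does not yield the inequality. You should also make explicit how $|P|$ (rather than a generic $\nu\cdot P$) appears: the boundary flux at infinity yields $E-\nu\cdot P$ together with the interior $\langle J,\nabla u\rangle$ term, and the choice $\nu=P/|P|$ together with Cauchy--Schwarz on $\langle J,\nabla u\rangle\ge-|J|\,|\nabla u|$ is what turns the integrand into $2(\mu-|J|)|\nabla u|\ge 0$.
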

This result has been first established by Schoen-Yau in \cite{SY2} using the Jang equation and by Witten in \cite{Witten} using spinors.
Further proofs have been given by \cite{Eichmair, EHLS, HKK}, and the important special case $k=0$ has been treated in \cite{AMO, BKKS, HuiskenIlmanen, Li, SY1, SY3}. We refer to \cite{HKK} for a more detailed historical overview.

\

It has been conjectured that if $E=|P|$, the IDS embeds isometrically in Minkowski spacetime with second fundamental form $k$.
This has been already confirmed under additional decay assumptions on $g$ and $k$ by Beig-Chru\'sciel and Huang-Lee in \cite{BeigChrusciel, HuangLee}.
More precisely, Beig-Chru\'sciel assume additionally $g_{ij}-\delta_{ij}\in C^{3,\alpha}_{-\tau}(M)$, $k_{ij}\in C^{2,\alpha}_{-\tau-1}(M)$ and $\mu,J\in C^{1,\alpha}_{-3-\epsilon}(M)$ for some constants $\tau>\frac12$, $\epsilon>0$ and $0<\alpha <1$.
Huang-Lee assume additionally $\mu,J\in C^{0,\alpha}_{-3-\epsilon}(M)$ and $\text{Tr}_gk\in C^0_{-2-\epsilon}(M)$.
As observed in \cite{HKK}, the decay condition $\text{Tr}_gk\in C^0_{-2-\epsilon}(M)$ can be omitted by combining \cite{HuangLee} with \cite{HKK}. 
However, the general case is still an open question and is for instance listed as conjecture in \cite{Lee}, page 226.

\

Furthermore, we would like to point on that \cite{HuangLee} and \cite{CM} addressed the rigidity conjecture in higher dimension under certain additional assumptions.
However, the situation becomes more subtle, see for instance the counter example constructed in \cite{HuangLee}.
Finally, we would also like to point out the paper \cite{HuangJangMartin} on the rigidity of asymptotically hyperbolic manifolds.

\

In this manuscript we establish the following result which removes the additional decay assumptions required in previous papers:
\begin{theorem}\label{Theorem: main}
Let $(M,g,k)$ be a complete asymptotically flat initial data set satisfying the dominant energy condition $\mu\ge|J|$.
Moreover, suppose that $E=|P|$.
Then $E=|P|=0$ and $(M,g,k)$ arises as spacelike slice of Minkowski spacetime $\R^{3,1}$.
\end{theorem}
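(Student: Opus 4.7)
The plan is to deploy the spacetime harmonic function method of Hirsch--Kazaras--Khuri in the direction singled out by $P$, read off strong pointwise identities from the equality case of the dominant energy condition, and then invoke a Liouville-type argument on $(M,g)$ to force $(M,g)$ to be Euclidean and $k$ to vanish. The isometric embedding into Minkowski space is then immediate.

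By the positive mass theorem $E\geq|P|\geq 0$; after rotating the asymptotic frame we may assume $P=|P|\,e_{3}$. In the designated end we solve the spacetime harmonic equation $\Delta u+(\operatorname{Tr}_{g}k)|\nabla u|=0$ with $u(x)\to x_{3}$ asymptotically there and bounded asymptotics in any other ends. The Hirsch--Kazaras--Khuri integral identity reads
\[
\int_{M}\left(\frac{|\nabla^{2}u+|\nabla u|\,k|^{2}}{2|\nabla u|}+\mu|\nabla u|+\langle J,\nabla u\rangle\right)dV\;\leq\;8\pi(E-P_{3})=0.
\]
The integrand is pointwise nonnegative by DEC (since $\mu|\nabla u|+\langle J,\nabla u\rangle\geq(\mu-|J|)|\nabla u|\geq 0$), so every summand vanishes identically:
\[
\nabla^{2}u=-|\nabla u|\,k,\qquad \mu=|J|,\qquad J=-|J|\,\frac{\nabla u}{|\nabla u|}\ \text{where }|\nabla u|>0.
\]

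The core step is to upgrade these pointwise identities to $|\nabla u|\equiv 1$ on all of $M$. Writing $f=|\nabla u|$, the equation $\nabla^{2}u=-fk$ gives $\nabla f=-k(\nabla u,\cdot)^{\sharp}$; commuting derivatives and combining with the momentum constraint $J=\operatorname{div}_{g}(k-(\operatorname{Tr}_{g}k)g)$ together with the alignment $J=-\mu\,\nabla u/f$ yields a homogeneous second-order linear elliptic equation for $f-1$ whose coefficients are controlled by the decay \eqref{asymflat}. Since $f-1\to 0$ in the designated end, a Liouville-type theorem for asymptotically flat $3$-manifolds (bounded solutions decaying at spatial infinity must be constant) forces $f\equiv 1$ on $M$. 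Then $\nabla^{2}u=-k$; in particular $u$ has no critical points, its gradient flow exhibits $(M,g)$ as $(\R\times\Sigma,\,dt^{2}+h_{t})$, and from $\nabla f=0$ one further obtains $k(\nabla u,\cdot)=0$. Propagating this rigidity along the foliation using the Hamiltonian constraint and the alignment $J=-\mu\,\partial_{t}$ forces $k\equiv 0$, $h_{t}$ $t$-independent and flat, so that $(M,g)$ is isometric to $(\R^{3},\delta)$. The boundary definition of $P$ then yields $P=0$, and hence $E=|P|=0$.

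With $k\equiv 0$ and $(M,g)=(\R^{3},\delta)$, the map $\Phi\colon M\to\R^{3,1}$, $\Phi(p)=(u_{1}(p),u_{2}(p),u_{3}(p),0)$, defined using Euclidean coordinates on $\R^{3}$, is an isometric embedding onto the canonical spacelike slice $\{t=0\}$ whose second fundamental form equals $k$. The main obstacle is the Liouville step: one must deduce $|\nabla u|\equiv 1$ from the nonlinearly coupled identity $\nabla^{2}u=-|\nabla u|\,k$ under only the standard decay \eqref{asymflat}, without the stronger hypotheses on $(\mu,J)$ of \cite{BeigChrusciel,HuangLee}. The elementary character advertised in the abstract presumably comes from isolating a clean scalar equation whose Liouville rigidity is available directly under \eqref{asymflat}.
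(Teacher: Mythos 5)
Your proposal reproduces the first step correctly: solving the spacetime harmonic equation and squeezing the HKK integral inequality against $E=|P|$ to obtain the pointwise identities $\nabla^{2}u=-|\nabla u|\,k$, $\mu|\nabla u|=-\langle J,\nabla u\rangle$, and the nonvanishing of $\nabla u$. But the core of your plan, forcing $|\nabla u|\equiv 1$, then $k\equiv 0$ and $(M,g)\cong(\R^{3},\delta)$, is aiming at a conclusion that is \emph{false}, and the Liouville step you sketch cannot succeed because what it would prove is not true.

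Consider a nontrivial spacelike graph $t=f(x)$ in Minkowski space with $f$ decaying suitably at infinity. The induced $(g,k)$ has $\mu=J=0$, so DEC holds, and $E=|P|=0$, but $k\not\equiv 0$ and $g\neq\delta$. The asymptotically linear spacetime harmonic function on such a slice (the restriction of a Minkowski coordinate) satisfies $\nabla^{2}u=-|\nabla u|\,k$ with $|\nabla u|$ genuinely nonconstant; indeed from the Hessian identity one only gets $\nabla_{i}|\nabla u|=-k_{ij}\nabla^{j}u$, which need not vanish. So there is no ``homogeneous elliptic equation for $f-1$'' whose Liouville rigidity gives $f\equiv 1$; the equation one derives must have nontrivial bounded solutions precisely because the rigid models are nontrivial slices. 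The correct target is not flatness of $(M,g)$ and vanishing of $k$, but rather the \emph{Gauss and Codazzi equations} for $(M,g,k)$, after which the Lorentzian fundamental theorem of hypersurfaces produces an embedding into a flat spacetime, and asymptotic flatness identifies that spacetime with $\R^{3,1}$. The paper's argument does exactly this: it observes that the level sets $\Sigma_t$ of $u$ are intrinsically flat with second fundamental form $-k|_{T\Sigma_t}$, shows by direct computation that almost all Gauss--Codazzi components vanish, and reduces the remaining obstruction to a symmetric $2$-tensor $A_{\alpha\beta}=\nabla_{3}k_{\alpha\beta}-\nabla_{\alpha}k_{\beta 3}$ which turns out to be the Hessian $\nabla^{2}_{\Sigma}F$ of a scalar $F$ on each level set. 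One then shows $\Delta^{\Sigma}F=|\nabla u|^{-2}\mu\ge 0$ and that $F$ differs from a linear function by a bounded function, and applies Liouville's theorem \emph{on the flat two-dimensional level set $\R^{2}$}, not on $M$. So the Liouville principle enters the proof, but in a different place and for a different scalar; the version you propose, applied on $M$ to $|\nabla u|-1$, would erase exactly the nontrivial rigid configurations that the theorem is supposed to classify.
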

Our theorem is optimal in the sense that we merely need to assume $\mu,J\in L^1(M)$ which is required to ensure that $E$ and $P$ are finite and independent of the coordinate system used.

\

Our proof is short, elementary and relies on two ingredients: 
First, we use the integral formula for spacetime harmonic functions $u$ established in \cite{HKK}. 
Using the integral formula, we deduce that $E=|P|$ implies that all level-sets of $u$ have vanishing Gaussian curvature.
Second, we employ the fundamental theorem of surfaces which states that if $(M,g,k)$ satisfies the Gauss and Codazzi equations, $(M,g,k)$ embeds isometrically into Minkowski spacetime. 
Combining the flatness of the level-sets with Liouville's theorem, we verify that the Gauss and Codazzi equations are indeed satisfied.
We expect that this method can also be applied in other settings such as asymptotically hyperbolic manifolds.
In Appendix \ref{B:Killing development} we give an alternative proof which uses the Killing development of $(M,g,k)$.

\

\textbf{Acknowledgements.}  
The authors would like to thank Hubert Bray, Demetre Kazaras, Marcus Khuri and Dan Lee for stimulating discussions and their interest in this work. 
We are also grateful for several helpful suggestions made by the anonymous referee.

\section{Preliminaries}\label{S:Preliminaries}

There are several tools available to study IDS such as the Jang equation \cite{SY2}, spinors \cite{Witten} and marginally outer trapped surfaces \cite{EHLS}.
In \cite{HKK} a new method to study IDS has been introduced: \emph{spacetime harmonic functions}.
The main result of \cite{HKK} states the following:

\begin{theorem}
There exists an asymptotically linear, spacetime harmonic function $u\in C^{2,\alpha}(M_{ext})$, i.e., a function $u$ solving the differential equation $\Delta u=-\text{Tr}_gk|\nabla u|$ with $u(x)=\langle\xi, x\rangle+O_2(|x|^{1-\tau})$ near infinity for some unit vector $\xi$, such that
\begin{align}\label{integral formula}
    E-|P|\ge\frac1{16\pi}\int_{M_{ext}}\left( \frac{|\nabla^2u+k|\nabla u||^2}{|\nabla u|}+2\mu|\nabla u|+2\langle J,\nabla u\rangle\right).
\end{align}
\end{theorem}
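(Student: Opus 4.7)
The proof has two essentially independent parts: constructing the asymptotically linear spacetime harmonic function $u$, and deriving the integral inequality.

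For the construction, I would solve a sequence of Dirichlet problems for the quasilinear equation $\Delta u_n = -\Tr_g k\,|\nabla u_n|$ on an exhaustion $\{\Omega_n\}$ of $M_{ext}$ by compact domains, with boundary data $u_n|_{\partial \Omega_n} = \langle a, x\rangle$ for a unit vector $a\in\R^3$ chosen so that the flux at infinity reproduces $E-|P|$ (specifically $a = P/|P|$, or any unit vector if $P=0$). Solvability on each $\Omega_n$ follows from a Leray-Schauder continuation argument along the family $\Delta u = -s\,\Tr_g k\,|\nabla u|$, $s\in[0,1]$, once uniform a priori $C^1$-estimates are established: the $C^0$-bound from the maximum principle by comparison with the boundary data plus a radial barrier, and the $C^1$-bound from a Bernstein-type computation on $|\nabla u_n|^2$ exploiting $\Tr_g k = O(|x|^{-\tau-1})$ from \eqref{asymflat}. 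Schauder theory then yields uniform $C^{2,\alpha}$ bounds, and a subsequence converges on compact subsets to $u\in C^{2,\alpha}(M_{ext})$ with asymptotic linearity $u - \langle a, x\rangle = o(|x|)$.

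For the integral inequality, the central calculation is a pointwise identity relating level-set geometry of $u$ to the integrand in \eqref{integral formula}. Let $\Sigma_t = u^{-1}(t)$, $\nu = \nabla u/|\nabla u|$, and write $A$, $H$ for the second fundamental form and mean curvature of $\Sigma_t$. The Gauss equation $2K_{\Sigma_t} = R_g - 2\Ric(\nu,\nu) + H^2 - |A|^2$, the spacetime harmonic relation $H = -\Tr_g k - \nabla^2 u(\nu,\nu)/|\nabla u|$, the decomposition $R_g = 2\mu + |k|^2 - (\Tr_g k)^2$, and the momentum identity $J_i = \nabla^j(k_{ij}-\Tr_g k\,g_{ij})$ combine — after completing the square into $|\nabla^2 u + k|\nabla u||^2$ — to produce a pointwise identity of the schematic form
\begin{align*}
\div(X) = \frac{|\nabla^2 u + k|\nabla u||^2}{|\nabla u|} + 2\mu|\nabla u| + 2\langle J, \nabla u\rangle + 2|\nabla u|\, K_{\Sigma_t}
\end{align*}
for an explicit vector field $X$ built from $\nabla u$, $g$, and $k$. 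Integrating over a large coordinate ball $B_R\subset M_{ext}$, the co-area formula and Gauss-Bonnet ($\int_{\Sigma_t} K_{\Sigma_t} \le 2\pi\chi(\Sigma_t)\le 4\pi$ per component, combined with Sard's theorem to handle singular values) control the level-set term, while the boundary flux of $X$ through $\partial B_R$ converges to $16\pi(E-|P|)$ as $R\to\infty$ thanks to \eqref{asymflat} and the choice of $a$; rearranging yields \eqref{integral formula}.

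The principal obstacles are threefold. First, the Bernstein-type $C^1$-estimate is delicate because $\Tr_g k$ decays only at the borderline rate $\tau+1$ with $\tau > 1/2$; the argument must be carried out in weighted Hölder spaces adapted to \eqref{asymflat}, with barriers matching the prescribed linear asymptotics. Second, bounding the Euler-characteristic contribution on level sets requires controlling the number and topological type of components of $\Sigma_t$: asymptotic linearity gives a unique ``large'' planar component outside a bounded ball, but non-spherical interior topology demands a Morse-theoretic accounting of critical points of $u$. Third, matching the boundary flux with $16\pi(E-|P|)$ demands careful tracking of all lower-order terms in the expansion of $X$ at infinity, analogous to the classical computation in Witten's spinorial proof.
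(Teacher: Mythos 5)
This theorem is not proved in the paper at all: it is the main result of \cite{HKK}, quoted verbatim, so the only fair comparison is with the proof given there. Your outline does follow the same strategy as \cite{HKK}: solve the Dirichlet problem for $\Delta u=-\mathrm{Tr}_g k\,|\nabla u|$ on an exhaustion with linear boundary data and uniform a priori estimates, then derive the inequality from a Bochner--Gauss type identity for $|\nabla u|$, completing the square to $|\nabla^2u+k|\nabla u||^2$ and producing $\mu$, $J$ from the constraint equations, with the level-set Gaussian curvature handled by the coarea formula and Gauss--Bonnet, and the boundary flux producing $E-|P|$ (up to the sign convention: the asymptotic direction must be chosen so that $\langle P,a\rangle=-|P|$).

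However, the Gauss--Bonnet step as you describe it would fail, and it is precisely where the real work in \cite{HKK} lies. First, the sign is off: the Gauss equation puts the curvature term on the boundary side, i.e.\ schematically $\int_{\Omega}(\text{nonnegative bulk})\le \int_{\partial\Omega}X\cdot\upsilon+\int dt\int_{\Sigma_t}K$, so one needs an \emph{upper} bound on $\int_{\Sigma_t}K$, not the identity you wrote (with $+2|\nabla u|K$ next to the bulk term, a lower bound on $\int K$ would be required, which Gauss--Bonnet does not give). Second, a per-level bound $\int_{\Sigma_t}K\le 2\pi\chi\le 4\pi$ is useless here: since $u$ is asymptotically linear, $t$ ranges over an interval of length comparable to $R$ in the exhaustion, so a constant bound per slice produces an $O(R)$ error that destroys the inequality. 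The level sets inside the exhausting domain are surfaces with boundary (asymptotically planar disks), and Gauss--Bonnet reads $\int_{\Sigma_t}K=2\pi\chi-\int_{\partial\Sigma_t}\kappa_g$; one must show that the defect $2\pi\chi-\int\kappa_g$ is asymptotically negligible (or of favorable sign) uniformly in $t$, which requires controlling the topology of level-set components in the exterior region, excluding closed components, and estimating the geodesic curvature of the boundary circles to the precision allowed by $\tau>\tfrac12$. In \cite{HKK} this is done by exhausting with domains bounded by level sets and large coordinate cylinders rather than balls, exactly so that these boundary contributions combine with the flux of $X$ to converge to $16\pi(E-|P|)$. Your sketch gestures at this (``Morse-theoretic accounting'') but the mechanism you actually propose does not close this step.
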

Note that $\xi=-\frac{P}{|P|}$ in case $P$ is non-zero.
We refer to \cite{HKK} for a discussion of the exterior region $M_{ext}$ and to \cite{BHKKZ} for a detailed motivation of spacetime harmonic functions.
The above theorem yields directly:
\begin{corollary}\label{cor:main}
Let $(M,g,k)$ be an asymptotically flat initial data set satisfying the dominant energy condition $\mu\ge|J|$ and suppose $E=|P|$. Then $M=M_{ext}=\R^3$ and there exists an asymptotically linear spacetime harmonic function $u\in C^{3,\alpha}(M)$ satisfying
\begin{align}
    \nabla^2u=&-k|\nabla u|,\\
    \mu|\nabla u|=&-\langle J,\nabla u\rangle.
\end{align}
Moreover, $|\nabla u|\ne0$ and the level sets $\Sigma_t=\{u=t\}$ are flat with second fundamental form $h=-k|_{T\Sigma_t}$.
\end{corollary}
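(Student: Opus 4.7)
The plan is to exploit the tight integral inequality \eqref{integral formula}. Since $E = |P|$, its right-hand side vanishes, while the dominant energy condition gives $\mu|\nabla u| + \langle J, \nabla u\rangle \geq (\mu - |J|)|\nabla u| \geq 0$ and the first summand is manifestly non-negative. Hence all three integrands must vanish pointwise on $M_{ext}$, yielding
\begin{equation*}
\nabla^2 u + k|\nabla u| = 0,\qquad \mu|\nabla u| + \langle J,\nabla u\rangle = 0.
\end{equation*}

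To show $|\nabla u|>0$, contract the first equation with $\nabla u$ to get $\nabla_i|\nabla u|^2 = -2k_{ij}(\nabla u)^j|\nabla u|$, so $\nabla_i|\nabla u| = -k_{ij}(\nabla u)^j$ and $|\nabla \log|\nabla u|| \leq |k|_g$ wherever $|\nabla u|>0$. Since $u$ is asymptotically linear of unit slope, $|\nabla u| \to 1$ at infinity; and since $|k|_g$ decays like $|x|^{-\tau-1}$ with $\tau>\tfrac12$, it is integrable along any ray to infinity. Integrating the log-Lipschitz bound along such a ray from infinity to an arbitrary interior point yields a uniform positive lower bound for $|\nabla u|$, so $|\nabla u|>0$ throughout $M_{ext}$. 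A Schauder bootstrap on $\Delta u = -(\Tr_g k)|\nabla u|$ then improves the regularity of $u$ to $C^{3,\alpha}$. With the unit normal $\nu = \nabla u/|\nabla u|$ well-defined, for $X,Y \in T\Sigma_t$ the second fundamental form is $h(X,Y) = \nabla^2 u(X,Y)/|\nabla u| = -k(X,Y)$, as claimed.

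The main obstacle is the flatness $K_{\Sigma_t} \equiv 0$, which is not visible in the displayed form of \eqref{integral formula} and must be extracted from its derivation. The Stern--Bochner computation underlying \cite{HKK} produces, via co-area and Gauss--Codazzi, an additional term proportional to $|\nabla u|K_{\Sigma_t}$ in the pointwise identity from which \eqref{integral formula} is obtained; this contribution is absorbed at infinity by Gauss--Bonnet on the asymptotically planar leaves $\Sigma_t$, and tracing the sign conventions shows that equality throughout forces $K_{\Sigma_t} \equiv 0$ pointwise. Finally, any MOTS/MITS component in the boundary of $M_{ext}$ would contribute a strictly positive boundary term to \eqref{integral formula}, so equality requires $M = M_{ext}$; the resulting foliation of $M$ by complete flat leaves asymptotic to planes forces each $\Sigma_t \cong \R^2$, and hence $M \cong \R^3$.
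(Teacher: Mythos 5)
The first part of your argument matches the paper: equality in \eqref{integral formula} together with the DEC forces the two pointwise identities, and the second fundamental form computation $h = -k|_{T\Sigma_t}$ is correct. Your elementary argument for $|\nabla u|>0$ (integrating the bound $|\nabla\log|\nabla u||\le|k|$ along rays from infinity, using $\tau+1>1$) is a reasonable reconstruction of what the paper delegates to Lemma 7.1 and Proposition 7.2 of \cite{HKK}, and the Schauder bootstrap to $C^{3,\alpha}$ is the same. Your remark on $M=M_{ext}$ via positivity of MOTS boundary terms is sketchy but points in the right direction.

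The genuine gap is the flatness of the level sets. You claim that the Gaussian curvature term in the Stern-type derivation of \eqref{integral formula}, after being ``absorbed at infinity by Gauss--Bonnet,'' still has a sign that forces $K_{\Sigma_t}\equiv 0$ under equality. This does not work: once Gauss--Bonnet is applied, $\int_{\Sigma_t} K$ becomes a fixed topological (and boundary) quantity, not a term with a pointwise sign, so equality in the integral inequality gives no control on $K$ at individual points. The $K$-term simply does not survive as a signed summand in the final pointwise integrand of \eqref{integral formula}. The paper instead extracts flatness by a separate, post hoc Bochner computation that is only available \emph{after} the pointwise identities are established: using the traced Gauss equation along $\Sigma_t$ together with $\mu|\nabla u|=-\langle J,\nabla u\rangle$ one finds
\begin{align}
    \Delta|\nabla u|=\frac1{|\nabla u|}\bigl(-K|\nabla u|+|k|^2|\nabla u|^2-\langle \div k,\nabla u\rangle |\nabla u|\bigr),
\end{align}
while differentiating $\nabla^2 u=-k|\nabla u|$ directly gives
\begin{align}
    \Delta |\nabla u|=|k|^2|\nabla u|-\langle \div k,\nabla u\rangle.
\end{align}
Subtracting the two forces $K=0$ pointwise. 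This is the step your proposal is missing; without it the corollary is not established.
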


\begin{proof}
The identities for $\nabla^2u=-k|\nabla u|$ and $\mu|\nabla u|=-\langle J,\nabla u\rangle$ follow immediately from the integral formula \eqref{integral formula}.
This also implies $h=-k|_{T\Sigma_t}$.
Lemma 7.1 and Proposition 7.2 in \cite{HKK} established that $|\nabla u|\ne0$ and $M=M_{ext}=\R^3$.
The claim $u\in C^{3,\alpha}(M)$ follows immediately from Schauder estimates in combination with the non-vanishing of $|\nabla u|$.
Finally, the claim that the level sets have vanishing Gaussian curvature is implied from the following computation, also see \cite{BHKKZ2}.
Since $\mu|\nabla u|=-\langle J,\nabla u\rangle$, the Gaussian equations yield
\begin{align}
    \Delta|\nabla u|=\frac1{|\nabla u|}(-K|\nabla u|+|k|^2|\nabla u|^2-\langle \div k,\nabla u\rangle |\nabla u|)
\end{align}
where $K$ is the Gaussian curvature of $\Sigma_t$.
On the other side, we have by the equation $\nabla^2u=-k|\nabla u|$
\begin{align}
    \Delta |\nabla u|=|k|^2|\nabla u|-\langle \div k,\nabla u\rangle
\end{align}
which finishes the proof.
\end{proof}
To prove rigidity of the spacetime PMT we will need to use every piece of information given by this corollary.

\section{Proof of Theorem \ref{Theorem: main}}\label{S:main}
Throughout this section we assume $E=|P|$, and let $u$ be the asymptotically linear spacetime harmonic function from Corollary \ref{cor:main}.
Let $e_3=\frac{\nabla u}{|\nabla u|}$.
For a fixed level set $\Sigma$, we can express the level set metric by $dx^2_1+dx^2_2$ which is possible since $\Sigma$ is flat. 
Let $e_1=\partial_{x_1}$, $e_2=\partial_{x_2}$, then we extend $e_1$, $e_2$ to the entire manifold such that $\{e_1,e_2,e_3\}$ forms an orthonormal frame. We use Greek letter $\alpha$, $\beta$, $\gamma$ to denote $e_1,e_2$, and Roman letters $i,j,k,l$ to denote $e_1,e_2,e_3$.

\

We define $\bar R_{ijkl}=R_{ijkl}+k_{il}k_{jk}-k_{ik}k_{jl}$ and say that $(M,g,k)$ satisfies the Gauss and Codazzi equations if $\bar R_{ijkl}=0$ and $\nabla_ik_{jk}-\nabla_jk_{ik}=0$ for all $i,j,k,l$. 
Here we use the notation $R_{ijk}^le_l=[\nabla_i,\nabla_j]e_k-\nabla_{[e_i,e_j]}e_k$ as well as $R_{ijkl}=\langle[\nabla_i,\nabla_j]e_k-\nabla_{[e_i,e_j]}e_k,e_l\rangle$.
Moreover, we employ the Einstein summation convention.

\begin{proposition}\label{Fundamental theorem}
Suppose $(M,g,k)$ satisfies the Gauss and Codazzi equations, and assume that $M$ is diffeomorphic to $\R^3$.
Then $(M,g,k)$ arises as a subset of Minkowski spacetime.
\end{proposition}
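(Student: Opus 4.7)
The plan is to invoke the fundamental theorem of hypersurface theory, adapted to a Lorentzian ambient, in order to produce an isometric immersion $F: M \to \R^{3,1}$ whose second fundamental form equals $k$, and then to upgrade $F$ to a global embedding using asymptotic flatness. A crucial ingredient is Corollary~\ref{cor:main}, which in particular guarantees that $M = \R^3$ is simply connected, the topological hypothesis required in Cartan's integration theorem.

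Concretely, I would fix a smooth orthonormal frame $\{e_1,e_2,e_3\}$ on $M$, with dual coframe $\{\omega^1,\omega^2,\omega^3\}$ and Levi-Civita connection 1-forms $\omega^\alpha_\beta$, and augment these to $\R^{3,1}$-valued Cartan data by declaring $\omega^0 := 0$ and $\omega^0_\alpha = \omega^\alpha_0 := k_{\alpha\gamma}\omega^\gamma$. A direct computation shows that the Minkowski structure equations
\begin{align*}
    d\omega^a + \omega^a_b \wedge \omega^b = 0, \qquad d\omega^a_b + \omega^a_c \wedge \omega^c_b = 0
\end{align*}
reduce precisely to the torsion-freeness of $\nabla$, symmetry of $k$, the Gauss equation $\bar R_{ijkl} = 0$, and the Codazzi equation $\nabla_i k_{jl} = \nabla_j k_{il}$, all of which hold by hypothesis. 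Since $M$ is simply connected, Cartan's integration theorem then produces a smooth map $F: M \to \R^{3,1}$ together with a Lorentz-orthonormal frame $\{E_0,E_1,E_2,E_3\}$ along $F$ with $E_0$ future-timelike and $F_* e_\alpha = E_\alpha$, unique up to a Poincaré transformation. By construction $F$ is an isometric immersion onto a spacelike hypersurface whose second fundamental form pulls back to $k$.

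The remaining step is to promote $F$ from an immersion to a proper embedding. Normalizing the Poincaré freedom so that in the fixed asymptotic chart $F$ agrees with the standard inclusion $\R^3 \hookrightarrow \{t=0\} \subset \R^{3,1}$ to leading order, the decay $g_{ij}-\delta_{ij} = O(|x|^{-\tau})$ and $k_{ij} = O(|x|^{-\tau-1})$ inserted into the moving-frame ODE forces $E_\alpha \to \partial_\alpha$ and $E_0 \to \partial_t$ at infinity; integrating once more shows that $F(x)-x$ tends to a constant vector as $|x|\to\infty$. Thus $F$ is proper and injective outside a compact set while being a global local diffeomorphism, so $F$ is in fact a global embedding and $(M,g,k)$ is realized as a spacelike slice of $\R^{3,1}$. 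The step I anticipate as the main obstacle is exactly this embedding upgrade: Cartan's theorem only delivers an immersion, and ruling out self-intersections in the compact core of $M$ requires careful use of the asymptotic normalization.
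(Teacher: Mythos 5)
Your proposal is correct but takes a genuinely different route from the paper. The paper builds the ambient spacetime directly as a Gaussian normal neighborhood $\bigl((-\varepsilon,\varepsilon)\times M,\ -dt^2+g_t\bigr)$ by prescribing the evolution of $g_t$ and solving the Riccati ODE $\partial_t(\bar\nabla^2_{ij}t)=(\bar\nabla^2 t)^2_{ij}$ for the shape operator (following Petersen); flatness of $\bar g$ is then verified by propagating $\bar R_{tijt}=\bar R_{tijk}=\bar R_{ijkl}=0$ forward in $t$ from the Gauss and Codazzi equations at $t=0$. You instead run the Cartan moving-frame formulation, checking that the Maurer--Cartan structure equations reduce to Gauss and Codazzi, and then invoking Cartan's integration theorem to produce an isometric immersion $F\colon M\to\R^{3,1}$ directly. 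Both are standard proofs of the fundamental theorem of hypersurfaces, both use the fact that $M\cong\R^3$ is simply connected (from Corollary~\ref{cor:main}), and both need the asymptotic decay to get a uniform result on the noncompact $M$ (the paper to extract a uniform $\varepsilon$, you to normalize the Poincar\'e freedom). The paper's approach has the advantage of producing the flat ambient metric explicitly with only ODE existence theory; yours makes the role of the integrability conditions more transparent, since Gauss and Codazzi appear verbatim as the vanishing of the curvature $2$-forms.

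One imprecision worth flagging: you describe $F$ as ``a global local diffeomorphism,'' but $F$ lands in the $4$-dimensional $\R^{3,1}$, so it is only an immersion. The clean way to close the injectivity gap you rightly identify is to compose with linear projection $\pi$ onto the spacelike hyperplane orthogonal to the asymptotic value of $E_0$: since the image of $dF$ is spacelike, $\pi\circ F\colon\R^3\to\R^3$ is a local diffeomorphism, and by your asymptotic normalization it is proper, hence a covering map of the simply connected $\R^3$ and therefore a diffeomorphism, giving injectivity of $F$. The paper is even terser at the analogous step, so this is a reasonable level of detail to supply.
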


This is the Lorentzian version of the well-known fundamental theorem for hypersurfaces, also see Corollary 7.5 in \cite{BGM}. 
For the convenience of the reader we provide a proof in Appendix \ref{A:fundamental}.
In the next two lemma we demonstrate that the majority of the Gauss and Codazzi equations are already satisfied.

\begin{lemma}\label{codazzi 1}
We have
\begin{align}
   0=&\nabla_1 k_{23}-\nabla_2 k_{13},\\
   0=& \nabla_\alpha k_{\beta\beta}-\nabla_\beta k_{\alpha\beta},\\
   0=&\nabla_\alpha k_{33}-\nabla_3k_{\alpha 3}.
\end{align}
\end{lemma}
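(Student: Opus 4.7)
The plan is to apply the Ricci commutation identity to the Hessian of $u$, exploiting $\nabla^2 u = -k|\nabla u|$, and then to invoke the dominant energy condition to close the one remaining case.

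First I would derive a master Codazzi-type identity. Since $e_3 = \nabla u/|\nabla u|$, the components of $du$ in the orthonormal frame are $u_a = |\nabla u|\,\delta_{a3}$. Differentiating $|\nabla u|^2 = u^a u_a$ and substituting $u_{ab} = -k_{ab}|\nabla u|$ gives $\nabla_a |\nabla u| = -|\nabla u|\,k_{3a}$. Applying the Ricci identity $\nabla_a\nabla_b u_c - \nabla_b\nabla_a u_c = \pm R_{abcd}\,u^d$, substituting $\nabla_b u_c = u_{bc} = -k_{bc}|\nabla u|$ on the left, and dividing by $|\nabla u|$, one obtains (up to an overall sign on the curvature term)
\[
\nabla_a k_{bc} - \nabla_b k_{ac} \;=\; \pm R_{abc3} + k_{bc}\,k_{3a} - k_{ac}\,k_{3b}.
\]
This is the workhorse formula.

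The first and third equations of the lemma are then read off by choosing indices so that the right-hand side vanishes. For the first, taking $(a,b,c) = (1,2,3)$ makes $R_{1233} = 0$ by antisymmetry of $R$ in its last two slots, while $k_{23}k_{31} - k_{13}k_{32}$ cancels by symmetry of $k$. For the third, taking $(a,b,c) = (\alpha, 3, 3)$ gives $R_{\alpha 3 3 3} = 0$, and the quadratic $k$-terms cancel.

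The second equation cannot be read off directly, and here I would bring in the dominant energy condition. Corollary \ref{cor:main} provides $\mu|\nabla u| = -\langle J,\nabla u\rangle$, i.e.\ $\mu = -J_3$ in our frame; combined with $\mu \geq |J|$, squaring forces $J_1 = J_2 = 0$. Expanding the definition $J_\alpha = \nabla^i k_{i\alpha} - \nabla_\alpha \Tr(k)$, separating normal from tangential contributions and using symmetry of $k$, yields
\[
J_\alpha \;=\; -\bigl(\nabla_\alpha k_{\beta\beta} - \nabla_\beta k_{\alpha\beta}\bigr) - \bigl(\nabla_\alpha k_{33} - \nabla_3 k_{\alpha 3}\bigr),
\]
with $\beta$ summed over $\{1,2\}$. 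Since the third equation has just been proved and $J_\alpha = 0$, the second equation follows.

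The only real obstacle is getting the sign and index conventions right when deriving the master identity; once that is done, each of the three claims reduces either to a one-line index substitution or to the elementary manipulation of $J_\alpha$ above.
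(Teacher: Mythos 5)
Your proposal is correct and takes essentially the same approach as the paper: both apply the Ricci commutation identity to the spacetime Hessian equation $\nabla^2 u = -k|\nabla u|$ to get the first and third identities, and both obtain the second by combining these with $J_\alpha = 0$ (which follows from $\mu = -J_3$ and the DEC). The only cosmetic difference is that you package the computation into a single master identity before specializing indices, whereas the paper carries out the two index substitutions directly.
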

\begin{proof}
The first identity follows from
\begin{align}
    \nabla_1k_{23}-\nabla_2k_{13}=&-\nabla_1\frac{\nabla^2_{23}u}{|\nabla u|}+\nabla_2\frac{\nabla^2_{13}u}{|\nabla u|}=R_{2133}=0.
\end{align}
Observe that
$ \mu|\nabla u|=-\langle J,\nabla u\rangle$ together with the DEC $\mu\ge|J|$ yields $J_\alpha=0$.
This implies
\begin{align}
\nabla_\beta k_{\alpha\beta}-\nabla_\alpha k_{\beta\beta}+\nabla_3k_{\alpha3}-\nabla_\alpha k_{33}=0.
\end{align}
Thus, we have
\begin{align}
    \nabla_3k_{\alpha 3}-\nabla_\alpha k_{33}=-\nabla_3\frac{\nabla^2_{\alpha 3}u}{|\nabla u|}+\nabla_\alpha\frac{\nabla^2_{33}u}{|\nabla u|}=R_{\alpha 333}=0
\end{align}
which implies the last two identities.
\end{proof}

\begin{lemma}\label{Lemma:spacetime curvature}
We have 
\begin{align}
    \bar R_{1212}=&0,\\
    \bar R_{\alpha\beta3\alpha}=&0,\\
    \bar R_{\alpha33\beta}=&A_{\alpha\beta}.
\end{align}
where $A_{\alpha\beta}:=\nabla_3k_{\alpha\beta}-\nabla_\alpha k_{\beta3}$.
\end{lemma}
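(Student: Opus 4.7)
The plan is to first derive a clean master formula
\[
\bar R_{3ijk} = \nabla_k k_{ij} - \nabla_j k_{ik},
\]
from which the second and third claims fall out by a one-line index manipulation (together with Lemma \ref{codazzi 1}), while the first claim comes from the Gauss equation of the flat level set $\Sigma$.

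Starting from $\nabla^2 u = -k|\nabla u|$ (Corollary \ref{cor:main}) and the derived identity $e_i(|\nabla u|) = -|\nabla u|\,k_{3i}$ (obtained by differentiating $|\nabla u|^2$ and substituting the Hessian formula), one computes the third covariant derivative
\[
u_{;ijk} = -(\nabla_k k_{ij})|\nabla u| + k_{ij}k_{3k}|\nabla u|.
\]
Antisymmetrising in $j,k$ and comparing with the Ricci identity applied to the one-form $du$ (using $\nabla u = |\nabla u|e_3$) yields
\[
R_{3ijk} = \nabla_k k_{ij} - \nabla_j k_{ik} + k_{ik}k_{3j} - k_{ij}k_{3k}.
\]
The key observation is that the definition $\bar R_{ijkl} = R_{ijkl} + k_{il}k_{jk} - k_{ik}k_{jl}$ contributes a correction $k_{3k}k_{ij} - k_{3j}k_{ik}$ which, by symmetry of $k$, exactly cancels the quadratic terms above, leaving the master formula.

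Both of the last two identities in the lemma then reduce to one-step applications of this formula via the pair-swap symmetry $\bar R_{ijkl} = \bar R_{klij}$, which $\bar R$ inherits from $R$ because its correction is itself pair-swap symmetric. For $\bar R_{\alpha\beta 3\alpha}$ I would write $\bar R_{\alpha\beta 3\alpha} = \bar R_{3\alpha\alpha\beta}$ and apply the master formula to obtain $\nabla_\beta k_{\alpha\alpha} - \nabla_\alpha k_{\alpha\beta}$, which vanishes by the second identity of Lemma \ref{codazzi 1}. For $\bar R_{\alpha 33\beta}$, pair-swap gives $\bar R_{3\beta\alpha 3}$, and the master formula delivers $\nabla_3 k_{\alpha\beta} - \nabla_\alpha k_{\beta 3} = A_{\alpha\beta}$ directly after using the symmetry $k_{\beta\alpha} = k_{\alpha\beta}$.

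The remaining identity $\bar R_{1212} = 0$ has no $3$-index, so it lies outside the reach of the master formula; for it I would invoke the classical Gauss equation for the hypersurface $\Sigma \subset M$ with unit normal $e_3$. Corollary \ref{cor:main} supplies both ingredients needed: the Gaussian curvature of $\Sigma$ vanishes, and its second fundamental form is $h_{\alpha\beta} = -k_{\alpha\beta}$. The Gauss equation then gives $R_{1212} = k_{11}k_{22} - k_{12}^2$, which exactly cancels the quadratic correction in $\bar R_{1212}$. The main delicate point of the whole argument is the sign bookkeeping in the Ricci identity when deriving the master formula and verifying that all quadratic $k$-terms cancel; once that identity is established, the three claims are essentially immediate.
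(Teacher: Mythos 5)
Your proof is correct and takes essentially the same route as the paper: the Gauss equation for the flat level set gives $\bar R_{1212}=0$, and commuting covariant derivatives of $u$ via the Ricci identity (together with Lemma \ref{codazzi 1}) gives the components carrying a $3$-index. The difference is one of packaging, and your packaging is tighter. By differentiating $\nabla^2 u=-k|\nabla u|$ once more, antisymmetrising, and observing that the resulting quadratic $k$-terms in $R_{3ijk}$ are exactly cancelled by the correction built into $\bar R$, you obtain the single master formula $\bar R_{3ijk}=\nabla_k k_{ij}-\nabla_j k_{ik}$; the second and third claims then drop out by the pair-swap symmetry of $\bar R$ and the symmetry of $k$. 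The paper instead computes $R_{\alpha\beta3\alpha}$ from the commutator applied to $\nabla_\alpha u/|\nabla u|$ and asserts the third identity follows ``in the same spirit,'' leaving the reader to redo the computation; your unified formula makes that step transparent and reuses the cancellation only once. One small discrepancy worth flagging: the paper's displayed Gauss equation $R_{1212}=2K-h_{11}h_{22}+h_{12}^2$, taken at face value with $K=0$ and $h=-k|_\Sigma$, would yield $\bar R_{1212}=2(k_{12}^2-k_{11}k_{22})$, not $0$. The sign and normalisation you use, $R_{1212}=k_{11}k_{22}-k_{12}^2$, is the one consistent with the definition of $\bar R$ and with the stated conclusion $\bar R_{1212}=0$, so the paper's display appears to carry a sign and factor typo that your bookkeeping quietly corrects.
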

\begin{proof}
Using the Gauss equations we obtain
\begin{align}
    R_{1212}=2K+h_{11}h_{22}-h_{12}^2.
\end{align}
Thus, the first identity follows from $K=0$ and $h=-k|_{T\Sigma}$.
Next, we compute
\begin{align}
    R_{\alpha\beta3\alpha}=&|\nabla u|^{-1}(\nabla_\alpha\nabla_\beta-\nabla_\beta\nabla_\alpha)\nabla_\alpha u
    \\=& -|\nabla u|^{-1}\nabla_\alpha( k_{\alpha \beta}|\nabla u|)+|\nabla u|^{-1}\nabla_\beta( k_{\alpha \alpha}|\nabla u|)
    \\=&-\nabla_\alpha k_{\alpha \beta}+\nabla_{\beta}k_{\alpha \beta}+k_{\alpha 3}k_{\alpha \beta}-k_{\beta 3}k_{\alpha\alpha},
\end{align}
using the spacetime Hessian equation $\nabla^2u=-k|\nabla u|$, then we obtain
\begin{align}
    \bar R_{\alpha\beta\alpha3}&=
    R_{\alpha\beta \alpha 3}+k_{\alpha 3}k_{\beta \alpha}-k_{\alpha\alpha}k_{\beta  3}
    \\&=\nabla_\alpha k_{\alpha\beta}-\nabla_\beta k_{\alpha\alpha}=0,
\end{align}
where the last equality follows from the previous lemma.
Finally, the third identity follows in the same spirit as the second one.
\end{proof}

Next, we show that $A_{\alpha\beta}$ is vanishing. 
This will be achieved by PDE methods in combination with the asymptotics of $g,k$.
\begin{lemma}\label{L:F existence} On each level set, 
there exists a twice differentiable function $F$ such that 
\begin{align}
\nabla^\Sigma_{\alpha\beta}F=|\nabla u|^{-2}A_{\alpha\beta}.
\end{align}
\end{lemma}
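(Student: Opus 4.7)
Since by Corollary 2.2 the level set $\Sigma$ is complete, flat, and (by the asymptotic linearity of $u$) asymptotic to a plane, $\Sigma$ is globally isometric to $\R^2$. On a simply connected flat 2-manifold, a symmetric tensor $T_{\alpha\beta}$ equals the intrinsic Hessian of some function $F$ if and only if (i) $T_{\alpha\beta}=T_{\beta\alpha}$, and (ii) $\nabla^\Sigma_\gamma T_{\alpha\beta}=\nabla^\Sigma_\alpha T_{\gamma\beta}$. Indeed, (ii) together with the Poincar\'e lemma produces a 1-form $\omega$ with $\nabla^\Sigma_\alpha\omega_\beta=T_{\alpha\beta}$, and (i) then forces $\omega$ to be closed, hence exact, giving $\omega=dF$. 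My task is therefore to verify (i) and (ii) for $T_{\alpha\beta}:=|\nabla u|^{-2}A_{\alpha\beta}$.

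For (i), Lemma \ref{Lemma:spacetime curvature} identifies $A_{\alpha\beta}=\bar R_{\alpha 3 3\beta}=R_{\alpha 3 3\beta}+k_{\alpha\beta}k_{33}-k_{\alpha 3}k_{3\beta}$. The Riemann piece obeys $R_{\alpha 3 3\beta}=R_{\beta 3 3\alpha}$ by the pair-swap symmetry $R_{abcd}=R_{cdab}$, and the algebraic piece is manifestly symmetric in $\alpha,\beta$; together these yield (i).

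For (ii), I would first use the identity $\nabla_i|\nabla u|=-|\nabla u|\,k_{i3}$, a direct consequence of the Hessian equation $\nabla^2u=-k|\nabla u|$, to expand the derivative of $T$ via the quotient rule; condition (ii) then reduces to an identity asserting that the antisymmetrization $\nabla^\Sigma_{[\gamma}A_{\alpha]\beta}$ equals $k_{\alpha 3}A_{\gamma\beta}-k_{\gamma 3}A_{\alpha\beta}$ (up to a universal factor). To establish this I would combine three ingredients: the second Bianchi identity $\nabla_{[m}R_{ij]kl}=0$, specialized to the adapted frame, which expresses the antisymmetric part of $\nabla_\gamma R_{\alpha 3 3\beta}$ through other curvature components such as $\nabla_3 R_{\gamma\alpha 3\beta}$; the Codazzi-type relations of Lemma \ref{codazzi 1} (coming from $J_\alpha=0$ together with $\nabla^2 u=-k|\nabla u|$) and the already established vanishings $\bar R_{1212}=0$, $\bar R_{\alpha\beta 3\gamma}=0$ of Lemma \ref{Lemma:spacetime curvature}, which convert the remaining Riemann terms back into derivatives of $k$; and the standard relation between intrinsic and ambient covariant derivatives on the hypersurface, which uses $h_{\alpha\beta}=-k_{\alpha\beta}$ from Corollary \ref{cor:main} to absorb the tangential/normal split into quadratic terms in $k$. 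Once (i) and (ii) are verified, the Poincar\'e lemma on $\Sigma\cong\R^2$ yields $F$, and Schauder regularity in combination with the non-vanishing of $|\nabla u|$ makes $F$ at least twice differentiable.

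The main obstacle is (ii). Although we have Bianchi for $R$ and Codazzi for $k$, the tensor $A_{\alpha\beta}$ mixes these bookkeepings via $\bar R$ and via ambient derivatives of $k$, so the proof requires careful shuffling between the two, a clean tracking of tangential versus normal components, and the exhaustive use of every vanishing entry from Lemmas \ref{codazzi 1} and \ref{Lemma:spacetime curvature}.
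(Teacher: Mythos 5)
The framework you propose is correct and, after unwinding, is equivalent to what the paper does. Your characterization of when a symmetric tensor $T_{\alpha\beta}$ is an intrinsic Hessian on $\Sigma\cong\R^2$ --- symmetry plus the integrability condition $\nabla^\Sigma_\gamma T_{\alpha\beta}=\nabla^\Sigma_\alpha T_{\gamma\beta}$ --- is exactly the pair of closedness conditions the paper checks when it first constructs $F_1,F_2$ from the closed one-forms $|\nabla u|^{-2}A_{1\beta}\,dx^\beta$, $|\nabla u|^{-2}A_{2\beta}\,dx^\beta$, and then $F$ from the closedness of $F_1\,dx_1+F_2\,dx_2$. Your verification of (i) via $A_{\alpha\beta}=\bar R_{\alpha 33\beta}$ and the pair symmetry $R_{\alpha 33\beta}=R_{\beta 33\alpha}$ is correct, and is a useful point that the paper leaves implicit (the paper needs $A_{12}=A_{21}$ when it matches $dF_1$ and $dF_2$). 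You also correctly identify the target identity for (ii), namely that $\nabla^\Sigma_\gamma(|\nabla u|^{-2}A_{\alpha\beta})-\nabla^\Sigma_\alpha(|\nabla u|^{-2}A_{\gamma\beta})=0$ reduces, via $\nabla_\alpha|\nabla u|=-|\nabla u|\,k_{\alpha 3}$, to $\partial_{\gamma}A_{\alpha\beta}-\partial_{\alpha}A_{\gamma\beta}=2(k_{\alpha 3}A_{\gamma\beta}-k_{\gamma 3}A_{\alpha\beta})$.

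The gap is that step (ii) is never actually carried out. You list the ingredients --- second Bianchi identity, the Codazzi relations from Lemma \ref{codazzi 1}, the vanishings $\bar R_{1212}=\bar R_{\alpha\beta 3\gamma}=0$ from Lemma \ref{Lemma:spacetime curvature}, and the intrinsic-vs-ambient bookkeeping via $h=-k|_{T\Sigma}$ --- and you correctly warn that "the main obstacle is (ii)," but you stop at a plan. This is where essentially all the work of the lemma lives: the paper spends roughly a page expanding $\partial_2A_{11}-\partial_1A_{12}$ in terms of $\nabla k$, commuting covariant derivatives to produce terms of the form $R_{ij\cdot}{}^{m}k_{m\cdot}$, substituting the Gauss equation $R_{ijkl}+k_{il}k_{jk}-k_{ik}k_{jl}=\bar R_{ijkl}$ together with the known values of $\bar R$ to convert those curvature terms into quadratic expressions in $k$ and $\nabla k$, and then watching all the purely quadratic-in-$k$ terms cancel exactly, leaving precisely $2A_{12}k_{13}-2A_{11}k_{23}$; a separate computation handles the $\nabla_3\nabla_\alpha k$ terms that appear when passing between $\partial_3$ and $\nabla_3$. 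Until that computation is done, the claim is not established: a gestured-at combination of Bianchi and Codazzi is not by itself a proof that the "remainder" vanishes, because the cancellation is quite delicate (indeed it is exactly what the condition $E=|P|$ makes possible via Lemmas \ref{codazzi 1} and \ref{Lemma:spacetime curvature}). You should carry this computation to completion before the argument can be considered a proof.
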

For the proof of this lemma we need to additionally assume that $g\in C^3(M)$ and $k\in C^2(M)$.
However, we provide an alternative approach to the spacetime PMT rigidity in Appendix \ref{B:Killing development}.
This approach does not require such additional regularity of $g$ and $k$ and therefore establishes Theorem \ref{Theorem: main} in full generality.

\begin{proof}
We first show that $\partial_2 (|\nabla u|^{-2} A_{11})=\partial_1 (|\nabla u|^{-2}A_{12})$ and $\partial_1 (|\nabla u|^{-2}A_{22})=\partial_2 (|\nabla u|^{-2}A_{12})$.
Since the level sets are flat, we can choose $\{e_1,e_2\}$ such that $\langle\nabla_{e_\alpha}e_\beta,e_\gamma\rangle=0$. 
Because $\langle \nabla_{e_\alpha}e_3,e_\beta\rangle=-k_{\alpha \beta} $ and applying Lemma \ref{codazzi 1},  we obtain
\begin{align}
    \partial_2 A_{11}
    =& \partial_2 (\nabla_3k_{11}-\nabla_1k_{13})
    \\=&\nabla_2(\nabla_3k_{11}-\nabla_1k_{13})-k_{2}^\alpha\nabla_\alpha k_{11}+2k_{21}\nabla_3 k_{31}
    \\&-k_{21}\nabla_3k_{13}-k_{21}\nabla_1 k_{33}+k_{2}^\alpha\nabla_1 k_{1\alpha}
    \\=&\nabla_2(\nabla_3k_{11}-\nabla_1k_{13}).
\end{align}
Therefore, we have
\begin{align}
    &\partial_2 A_{11}-\partial_1 A_{12}
    \\=&\nabla_2(\nabla_3k_{11}-\nabla_1k_{13})-\nabla_1(\nabla_3k_{12}-\nabla_2k_{13})
    \\=& \nabla_3\nabla_2k_{11}-2R_{231i}k_{1i}-\nabla_2\nabla_1k_{13}
    -(\nabla_3\nabla_1k_{12}-R_{131i}^ik_{i2}-R_{132i}k_{1i})
    \\&+(\nabla_2\nabla_1k_{13}-R_{121i}k_{i3}-R_{123i}k_{1i})
    \\=& \nabla_3\nabla_2k_{11}-\nabla_3\nabla_1 k_{12}
    -R_{2312}k_{12}-R_{2313}k_{13}+R_{1312}k_{22}
    \label{1R}
    \\&+R_{1313}k_{23}-R_{1212}k_{23}-R_{1213}k_{33}
    \label{2R}
    \\=& \nabla_3\nabla_2k_{11}-\nabla_3\nabla_1 k_{12}
    -(k_{12}k_{23}-k_{22}k_{13})k_{12}-(k_{12}k_{33}-k_{23}k_{13}-\nabla_3k_{12}+\nabla_1k_{23})k_{13}
    \label{R1}
    \\&+(k_{11}k_{23}-k_{12}k_{13})k_{22}+(k_{11}k_{33}-k_{13}^2-\nabla_3k_{11}+\nabla_1k_{13})k_{23}
    \label{R2}
    \\&-(k_{11}k_{22}-k_{12}^2)k_{23}-(k_{11}k_{23}-k_{13}k_{12})k_{33}
    \label{R3}
    \\=&\nabla_3\nabla_2k_{11}-\nabla_3\nabla_1 k_{12}-(-\nabla_3k_{12}+\nabla_1k_{23})k_{13}+(-\nabla_3k_{11}+\nabla_1k_{13})k_{23},
    \label{a2-c1,1}
\end{align}
where we applied Lemma
\ref{Lemma:spacetime curvature} to replace the curvature terms in \eqref{1R}-\eqref{2R}.
Due to the Hessian equation $\nabla^2u=-k|\nabla u|$, we have
 $\langle\nabla_3e_\alpha,e_3\rangle=-\langle\nabla_3 e_3,e_\alpha\rangle=k_{\alpha3}$.
 Combining this identity with Lemma \ref{codazzi 1}, we deduce
\begin{align}
    &\nabla_3\nabla_2k_{11}-\nabla_3\nabla_1 k_{12}
  \\=&\partial_3(\nabla_2 k_{11})-\nabla_{\nabla_{3}e_2}k_{11}-2\nabla_2k(\nabla_3 e_1,e_1)
  \\&-\partial_3(\nabla_1k_{12})+\nabla_{\nabla_3e_1}k_{12}+\nabla_1k(\nabla_3e_1,e_2)+\nabla_1k(e_1,\nabla_3e_2)
  \\=&-\langle e_1,\nabla_3e_2\rangle\nabla_1k_{11}-k_{23}\nabla_3k_{11}-2\langle\nabla_3e_1,e_2\rangle\nabla_2k_{21}-2k_{13}\nabla_2k_{31}
  +\langle e_2,\nabla_3 e_1\rangle \nabla_2k_{12}
  \\&+k_{13}\nabla_3k_{12}+\langle e_2,\nabla_3e_1\rangle\nabla_1k_{22}+k_{13}\nabla_1k_{32}+\langle e_1,\nabla_3e_2\rangle\nabla_1k_{11}+k_{23}\nabla_1k_{13}
  \\=&k_{23}(\nabla_1k_{13}-\nabla_3k_{11})-k_{13}(\nabla_1k_{32}-\nabla_{3}k_{12}).
  \label{a2-c1,2}
\end{align}
Here we also used that $\partial_3(\nabla_2k_11-\nabla_1k_{12})=0$ by Lemma \ref{codazzi 1}.
Combing Equation \eqref{a2-c1,1} and \eqref{a2-c1,2} yields
\begin{equation}
    \partial_2A_{11}-\partial_1A_{12}=2A_{12}k_{13}-2A_{11}k_{23}.
\end{equation}
Moreover, we have $\partial_\alpha |\nabla u|=-k_{\alpha 3}|\nabla u|$ which implies
\begin{align}
    &\partial_2(|\nabla u|^{-2} A_{11})-\partial_1(|\nabla u|^{-2} A_{12})
    \\=&|\nabla u|^{-2}(\partial_2 A_{11}-\partial_1 A_{12})+A_{11}\partial_2|\nabla u|^{-2}-A_{12}\partial_1 |\nabla u|^{-2}
    \\=&|\nabla u|^{-2}(2A_{12}k_{13}-2A_{11}k_{23})+2A_{11}|\nabla u|^{-2}k_{23}-2A_{12}|\nabla u|^{-2}k_{13}
    \\=&0.
\end{align}
Therefore, $|\nabla u|^{-2}A_{11} dx_1+|\nabla u|^{-2}A_{12} dx_2$ is closed, where $dx_1$ and $dx_2$ are the dual 1-forms of $e_1$ and $e_2$. 
Since the topology of a level set is trivial, there exists on each level set a function which we suggestively denote by $F_1$ such that $dF_1=|\nabla u|^{-2}A_{11} dx_1+|\nabla u|^{-2}A_{12} dx_2$. 
Replacing the roles of $e_1$ and $e_2$, there exists another function $F_2$ such that  $dF_2=|\nabla u|^{-2}A_{12} dx_1+|\nabla u|^{-2}A_{22} dx_2$. 
Next, we compute
\begin{align}
    d(F_1dx_1+F_2dx_2)=&\frac{\partial F_1}{\partial x_2}dx_2\wedge dx_1+\frac{\partial F_2}{\partial x_1}dx_1\wedge d x_2
    \\=&(|\nabla u|^{-2}A_{12}-|\nabla u|^{-2}A_{12})dx_2\wedge dx_1=0.
\end{align}
Thus there exists an $F$ with $dF=F_1dx_1+F_2dx_2$.
\end{proof}

\begin{lemma}\label{F:linear}
On each level set, $F$ is a linear function with respect to $x_1$ and $x_2$, i.e. $\nabla^2_\Sigma F=0$.
\end{lemma}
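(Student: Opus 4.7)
Since $\Sigma$ is complete and flat we identify it with $(\R^2, dx_1^2+dx_2^2)$, and the goal $\nabla^2_\Sigma F = 0$ amounts to showing $F$ is affine. The plan is to normalize $F$ by subtracting an affine function so that $F\to 0$ at infinity, then argue via Liouville's theorem on $\R^2$: verify that the normalized $F$ is subharmonic with zero total Laplacian, hence harmonic, hence identically zero.

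First I would establish the subharmonicity of $F$. Cauchy--Schwarz saturation in $\mu|\nabla u|=-\langle J,\nabla u\rangle$ together with the DEC forces $J = -\mu e_3$, and in particular $J_3=-\mu$. Expanding the divergence
\begin{equation*}
J_3 \;=\; \sum_\alpha \nabla_\alpha k_{\alpha 3} - \nabla_3(k_{11}+k_{22}) \;=\; -\Tr A
\end{equation*}
gives $\Tr A = \mu \ge 0$, whence $\Delta^\Sigma F = |\nabla u|^{-2}\Tr A \ge 0$.

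Next comes the decay estimate for $F$. The asymptotic flatness of $g,k$, together with the additional regularity $g\in C^3$, $k\in C^2$, yields $\nabla k = O(|x|^{-\tau-2})$ in the ambient asymptotic chart of $M$; since $u$ is asymptotically linear, the level set $\Sigma$ is asymptotic to a Euclidean plane and $|\nabla u|$ approaches a nonzero constant at infinity. Hence $\nabla^2 F = O(|x|^{-\tau-2})$ in the intrinsic coordinates $(x_1,x_2)$ on $\Sigma$. Because $\tau > 1/2$, integrating $\nabla^2 F$ outward along rays produces a direction-independent limit $v_\infty = \lim_{|x|\to\infty}\nabla F$ with rate $|\nabla F - v_\infty| = O(|x|^{-\tau-1})$; subtracting $v_\infty\cdot x$ we may assume $\nabla F\to 0$. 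A second integration, using the angular derivative bound $|F_\theta| \le r|\nabla F| = O(r^{-\tau})$ to rule out direction-dependence of the limit, gives $F\to c_\infty$ at infinity, and subtracting $c_\infty$ we may also assume $F\to 0$. Neither subtraction affects $\nabla^2 F$ or $\Delta F$.

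Finally, the divergence theorem on $B_R\subset\R^2$ yields
\begin{equation*}
\int_{\R^2}\Delta F\, dA \;=\; \lim_{R\to\infty}\oint_{\partial B_R}\partial_\nu F\, ds \;=\; 0,
\end{equation*}
since the integrand is $O(R^{-\tau-1})$ on $\partial B_R$ of length $2\pi R$ and $\tau>1/2$. Combined with $\Delta F\ge 0$, this forces $\Delta F \equiv 0$, so $F$ is harmonic on $\R^2$ with $F\to 0$; Liouville's theorem then gives $F\equiv 0$, so $\nabla^2_\Sigma F = 0$. I expect the main technical obstacle to be the second step: transferring the decay of $A$ from $M$'s ambient asymptotic chart to the intrinsic flat coordinates on $\Sigma$, which requires comparing the two coordinate systems near infinity and differentiating the spacetime Hessian equation one more time. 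This is precisely where the extra regularity of $g$ and $k$ enters, and is the reason the authors provide a different argument in Appendix~\ref{B:Killing development} to handle the sharp regularity assumptions.
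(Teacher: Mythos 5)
Your proposal is correct and follows essentially the same strategy as the paper: establish $\Delta^\Sigma F \ge 0$ from $\mu = -J_3 = \Tr A$, use the decay $\nabla^2_\Sigma F = O(|x|^{-\tau-2})$ coming from asymptotic flatness, and conclude by a Liouville-type argument on $\R^2$. The only cosmetic difference is in the final step: you normalize $F$ to decay and use a flux integral to upgrade subharmonic to harmonic before invoking Liouville for bounded harmonic functions, whereas the paper writes $F = L + B$ with $B$ bounded subharmonic and directly applies Liouville's theorem for bounded-above subharmonic functions on $\R^2$; both are valid and the difference is immaterial.
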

\begin{proof}
First observe that $F$ is superharmonic on each level set, i.e.
\begin{align}
    \Delta^\Sigma F\ge 0
\end{align}
which follows immediately from
\begin{equation}
    \Delta^\Sigma F=
    |\nabla u|^{-2}(A_{11}+A_{22})=-|\nabla u|^{-2}J_3
    =|\nabla u|^{-2}\mu\ge0. 
\end{equation}
Since $\partial^l k_{ij}=O(|x|^{-\tau-l-1})$, $l=0,1$, for some $\tau>\frac{1}{2}$, and $|\nabla u|=1+O(|x|^{-\tau})$, we obtain
\begin{equation}
F_{\alpha\beta}=\nabla^\Sigma_{\alpha\beta}F=|\nabla u|^{-2}(\nabla_3k_{\alpha\beta}-\nabla_\alpha k_{\beta3})=O(|x|^{-\tau-2}).
\end{equation}
Integrating $\nabla^2_\Sigma F$ twice over the level set $\Sigma$, we see that $F=L+B$, where $L$ is a linear function with respect to $\{x_1,x_2\}$, and $B$ is a bounded function.
Combining this with our previous observation yields $\Delta^\Sigma B=\Delta^\Sigma F\ge0$. Thus, $B$ is constant in view of Liouville's theorem.
\end{proof}
\begin{proof}[Proof of Theorem \ref{Theorem: main}]
Since $\nabla^2_\Sigma F=0$, $(M,g,k)$ satisfies the Gauss and Codazzi equations which completes the proof in view of the Proposition \ref{Fundamental theorem}.
\end{proof}

\appendix 

\section{The fundamental theorem of hypersurfaces}\label{A:fundamental}
\begin{proof}[Proof of Proposition \ref{Fundamental theorem}]
We follow the proof of \cite{petersen}, page 100.
Let $U$ be a compact subset of $M$.
We construct the metric $\Bar{g}=-dt^2+g_t$ on $(-\varepsilon,\varepsilon)\times U$ by prescribing
\begin{align}
    \partial_t g_t(\partial_i,\partial_j)=&2\bar{\nabla}^2_{ij} t,
    \\ \bar{g}|_{t=0}=&g,
    \\ \partial_t (\bar{\nabla}^2_{ij}t)-(\bar{\nabla}^2t)^2_{ij}=&
    0, \label{radial curvature}
    \\ \bar{\nabla}^2_{ij} t|_{t=0}=& k_{ij}
\end{align}
where $(\bar{\nabla}^2t)^2_{ij}=\bar g^{kl}(\bar{\nabla}^2_{ik}t)(\bar{\nabla}^2_{jl}t)$.
We will use Roman letters $\{i,j,k,l\}$ to denote indices tangential to $M$.
By standard ODE existence theory there exists a small $\varepsilon>0$ such that we can solve the above equation for $t\in(-\varepsilon,\varepsilon)$.
Next, we take a cover $\{U_i\}$ of $M$.
According to the asymptotics of $(M,g,k)$, 
there exists a uniform $\varepsilon>0$ for each $U_i$.
Therefore, we can patch together above's construction and $(M,g)$ can be embedded in $((-\varepsilon,\varepsilon)\times M,\bar{g})$ with the second fundamental form $k$.

\

To verify the flatness of $\bar g$ we proceed exactly as in \cite{petersen}.
It suffices to verify that the curvatures $\bar{R}_{tijt}$, $\bar{R}_{ijkl}$ and $\bar {R}_{tijk}$ are vanishing.
Observe that $\langle\bar{\nabla}t,\bar{\nabla} t\rangle=-1$ implies $\bar{\nabla}_i\bar{\nabla}_t t =0$.
Combining this with equation \eqref{radial curvature} yields
\begin{align}
    0=&\partial_t(\bar{\nabla}^2_{ij}t)-(\bar{\nabla}^2 t)^2_{ij}
    \\=&\bar{\nabla}_t\bar{\nabla}_i\bar{\nabla}_j t+(\bar{\nabla}^2 t)^2_{ij}
    \\=& \bar{\nabla}_i\bar{\nabla}_t\bar{\nabla}_j t-\bar{R}_{tijt}+(\bar{\nabla}^2 t)^2_{ij}
    \\=&\partial_i(\bar{\nabla}_t\bar{\nabla}_j t)-\bar{\nabla}^2t(\partial_t,\bar{\nabla}_i\partial_j)-\bar{\nabla}^2t(\partial_j,\bar{\nabla}_i\partial_t)-\bar{R}_{tijt}+(\bar{\nabla}^2 t)^2_{ij}
    \\=&-\bar{R}_{tijt}.
\end{align}
Since $\bar{R}_{tijt}=0$, $\bar{\nabla}_t \partial_t=0$ and $\bar{\Gamma}_{ti}^t=0$, we obtain
\begin{align}
    \partial_t(\bar{R}_{tijk})=&(\bar{\nabla}_t\bar{R})_{tijk}+\bar{R}_{tljk}\bar{\Gamma}_{ti}^l+\bar{R}_{tilk}\bar{\Gamma}_{tj}^l+\bar{R}_{tljk}\bar{\Gamma}_{tk}^l
    \\=&(\bar{\nabla}_j\bar{R})_{titk}+(\bar{\nabla}_k\bar{R})_{tijt}+\bar{R}_{tljk}\bar{\Gamma}_{ti}^l+\bar{R}_{tilk}\bar{\Gamma}_{tj}^l+\bar{R}_{tljk}\bar{\Gamma}_{tk}^l
    \\=&\partial_j(\bar{R}_{titk})-\bar{R}_{litk}\bar{\Gamma}_{jt}^l-\bar{R}_{tilk}\bar{\Gamma}_{jt}^l+
    \partial_k(\bar{R}_{tijt})-\bar{R}_{lijt}\bar{\Gamma}_{kt}^l-\bar{R}_{tijl}\bar{\Gamma}_{kt}^l
    \\&+\bar{R}_{tljk}\bar{\Gamma}_{ti}^l+\bar{R}_{tilk}\bar{\Gamma}_{tj}^l+\bar{R}_{tljk}\bar{\Gamma}_{tk}^l
    \\=&-\bar{R}_{litk}\bar{\Gamma}_{jt}^l-\bar{R}_{tilk}\bar{\Gamma}_{jt}^l-\bar{R}_{lijt}\bar{\Gamma}_{kt}^l-\bar{R}_{tijl}\bar{\Gamma}_{kt}^l
    +\bar{R}_{tljk}\bar{\Gamma}_{ti}^l+\bar{R}_{tilk}\bar{\Gamma}_{tj}^l+\bar{R}_{tljk}\bar{\Gamma}_{tk}^l.
\end{align}
According to the Codazzi equation, $\bar{R}_{tijk}|_{t=0}=0$, and thus $\bar{R}_{tijk}=0$.
Next, we compute
\begin{align}
    \partial_t(\bar{R}_{ijkl})=&(\bar{\nabla}_t \bar{R})_{ijkl}+\bar{R}_{sjkl}\bar{\Gamma}_{ti}^s+\bar{R}_{iskl}\bar{\Gamma}_{tj}^s+\bar{R}_{ijsl}\bar{\Gamma}_{tk}^s+\bar{R}_{ijks}\bar{\Gamma}_{tl}^s
    \\=&(\nabla_k \bar{R})_{ijtl}+(\nabla_l \bar{R})_{ijkt}
    +\bar{R}_{sjkl}\bar{\Gamma}_{ti}^s+\bar{R}_{iskl}\bar{\Gamma}_{tj}^s+\bar{R}_{ijsl}\bar{\Gamma}_{tk}^s+\bar{R}_{ijks}\bar{\Gamma}_{tl}^s
    \\=&-\bar{R}_{ijsl}\bar{\Gamma}_{kt}^s-\bar{R}_{ijks}\bar{\Gamma}_{lt}^s
    +\bar{R}_{sjkl}\bar{\Gamma}_{ti}^s+\bar{R}_{iskl}\bar{\Gamma}_{tj}^s+\bar{R}_{ijsl}\bar{\Gamma}_{tk}^s+\bar{R}_{ijks}\bar{\Gamma}_{tl}^s
    \\=&\bar{R}_{sjkl}\bar{\Gamma}_{ti}^s+\bar{R}_{iskl}\bar{\Gamma}_{tj}^s.
\end{align}
According to the Gauss equations, $\bar{R}_{ijkl}|_{t=0}=0$, and thus $\bar{R}_{ijkl}=0$. 
Therefore, $\bar M$ is flat which implies together with $M\cong\R^3$ that $\bar M$ is a subset of Minkowski spacetime.
\end{proof}

%%%%%%%%%%%%%%%
%%%%%%%%%%%%%%%%
%%%%%%%%%%%%%%%
%%%%%%%%%%%%%%%%%%

\section{Killing development}\label{B:Killing development}
Another way to prove rigidity for the spacetime PMT, is to construct a spacetime using spacetime harmonic function, and demonstrating that this spacetime is Minkowski space. 
For this purpose, we define on $\tilde M^4=\R\times M^3$ the Lorentzian metric
\begin{equation}
    \tilde{g}=2d\tau du+g
\end{equation}
where $\tau $ is the flat coordinate on the $\R$-factor.
This so-called \emph{Killing Development} is motivated by \cite{BeigChrusciel, HKK}, though we note that the Killing Development in \cite{BeigChrusciel, HKK} was obtained from three, rather than a single vector field.
Since $M^3\cong\R^3$, we have $\tilde M^4\cong\R^4$, and thus it suffices to show that $\tilde g$ is flat.
The flatness of $\tilde g$ follows essentially from the Gauss and Codazzi equations computed in Section \ref{S:main}.
We present here another approach which has the advantage that it does not require the additional regularity assumptions $g\in C^3(M^3)$ and $k\in C^2(M^3)$ used in Lemma \ref{L:F existence}, and therefore establishes Theorem \ref{Theorem: main} in full generality.

\

We first claim that we can write
\begin{equation}\label{g}
    g=(|\nabla u|^{-2}+a^2+b^2) du^2+2adudx_1+2bdudx_2+dx_1^2+dx_2^2,
\end{equation}
for some functions $a,b\in C^2(M^3)$.
This essentially follows from the flatness of the level-sets of $u$, but let us elaborate more on this construction:

\

To write $g$ in the above form, we need to define globally defined coordinates $x_1,x_2$. 
To do so, we begin with introducing global polar coordinates.
Given some point $p_0\in M^3$, let $\Gamma:(-\infty,+\infty)\to M^3$ be the integral curve through $p_0$ with respect to the vector field $\nabla u$.
We define the function $\rho(p)=d(p,\Gamma\cap \Sigma_{u(p)})$ where $d$ denotes the distance within the level set $\Sigma_{u(p)}$. 
Since $u\in C^3(M)$ and $|\nabla u|\neq0$, the second fundamental form of $\Sigma_{u(p)}$ is $C^1$.  
On each level set $\Sigma_t$ of $u$, we can write the metric $g_{\Sigma_t}$ as $d\rho^2+\rho^2d\theta^2$. 
We would like $g$ to have globally such a form, i.e., we need to define an angle function $\theta(p)\in[0,2\pi)$ for any $p\in M^3\backslash\Gamma$.
To uniquely determine $\theta(p)$, we fix another point $p_1\in M^3$ not contained in the image $\text{im}(\Gamma)$.
Let $\Gamma_1:(-\infty,\infty)\to M^3$ be the integral curve through $p_1$ with respect to the vector field $\nabla u$.
Since $|\nabla u|\neq 0$, we have $\text{im}(\Gamma)\cap \text{im}(\Gamma_1)=\varnothing$.
We set $\theta(\Gamma_1)=0$.
Thus, the Lorentzian metric $\tilde g$ can be written in the form
\begin{equation}
    \tilde g=2d\tau du+(|\nabla u|^{-2}+a_0^2+\rho^{-2}b_0^2)du^2+2a_0dud\rho+2b_0dud\theta+d\rho^2+\rho^2d\theta^2
\end{equation}
for some functions $a_0,b_0\in C^2(M^3\backslash\Gamma)$, where the $C^2$ regularity follows from the second fundamental form being $C^1$.
Finally, we change coordinates via $x_1=\rho\cos \theta$, $x_2=\rho\sin\theta$ and set 
\begin{align}
    a=a_0 \cos\theta- b_0\rho^{-1}\sin\theta,\;\;
    b=a_0\sin\theta+b_0\rho^{-1}\cos\theta
\end{align} 
to obtain
\begin{equation}
    \tilde{g}=2d\tau du+(|\nabla u|^2+a^2+b^2)du^2+2adudx_1+2bdudx_2+dx_1^2+dx_2^2
\end{equation}
as desired.

\

In $(\tau,u,x_1,x_2)$ coordinates, the inverse metric $\tilde g^{-1}$ is given by
\begin{equation}
    \tilde{g}^{-1}=\begin{bmatrix}
    -|\nabla u|^{-2} &1 & -a &-b
    \\ 1&0&0&0
    \\ -a&0&1&0
    \\-b&0&0&1
    \end{bmatrix}.
\end{equation}
Therefore, we have
\begin{equation}
    \tilde{\nabla} u=\tilde{g}^{u i}\partial _i=\partial_\tau.
\end{equation}
Moreover, the null vector $\tilde \nabla u=\partial_\tau$ is covariantly constant, i.e.,  $\tilde{\nabla}^2 u=0$.
Thus, $(\tilde M^4,\tilde g)$ is a pp-wave.
See \cite{Blau} for a more detailed discussion of such spacetimes.
Therefore, we have on $(M^3,g,k)$ 
\begin{equation}
0=\tilde{\nabla}^2_{ij}u|_{TM^3}=\nabla^2_{ij}u+\text{II}_{ij}\hat{N}(u)=(\text{II}_{ij}-k_{ij})|\nabla u|
\end{equation}
where $N=|\nabla u|(-|\nabla u|^{-2}\partial_\tau+\partial_u-a\partial_1-b\partial_2)$ is a time-like unit normal vector.
Thus, the second fundamental form $\text{II}$ of $(M^3,g)\subset (\tilde M^4,\tilde g)$ is given by $k$.

\

The vector fields $\{\partial_1,\partial_2,\partial_u,\partial_\tau\}$ form a frame of $T\tilde M^4$ and $\{\nabla u,\partial_1,\partial_2\}$ form an orthogonal frame of $TM^3$.
Using Mathematica, we obtain that the only non-vanishing Ricci curvature terms of $\tilde g$ are given by
\begin{align}
    \widetilde{\Ric}(\partial_u,\partial_{1})=&\frac{1}{2}(-a_{x_2x_2}+b_{x_1x_2}),
    \\
    \widetilde{\Ric}(\partial_u,\partial_2)=&\frac{1}{2}(a_{x_1x_2}-b_{x_1x_1}), \\
    \widetilde{\Ric}(\partial_u,\partial_u)=&\frac{1}{2}(a_{x_2}-b_{x_1})^2-\frac{1}{2}\Delta_{\R^2}(|\nabla u|^{-2}+a^2+b^2)+a_{ux_1}+b_{ux_2}.
    \label{Ric uu}
\end{align}
Taking the trace of $\widetilde{\Ric}$, we have $\tilde{R}=0$, then $\mu=\widetilde{\Ric}({N},{N})$ and $J=\widetilde{\Ric}({N},\cdot)$.
The identity $\langle J,\partial_1\rangle=\langle J,\partial_2\rangle=0$ yields $\widetilde{\Ric}({N},\partial_1)=\widetilde{\Ric}({N},\partial_2)=0$.
Combining this with $\mu\ge0$, we obtain $\widetilde{\Ric}(\partial_u,\partial_u)\ge0$.
The equation $\widetilde{\Ric}({N},\partial_1)=\widetilde{\Ric}({N},\partial_2)=0$ also implies
\begin{equation}
    a_{x_2x_2}=b_{x_1x_2} \quad\text{and}\quad a_{x_1x_2}=b_{x_1x_1}.
\end{equation}
Thus, $\psi:=a_{x_2}-b_{x_1}$ only depends on $u$.
Hence, there exists a function $l$ such that 
$a=x_2\psi(u)+l_{x_1}$ and $b=-x_1\psi(u)+l_{x_2}$. 
Inserting this into Equation \eqref{Ric uu}, we obtain
\begin{equation} 
\label{superharmonic}
\Delta_{\R^2}\left(\frac{1}{2}|\nabla u|^{-2}+\frac{1}{2}l_{x_1}^2+\frac{1}{2}l_{x_2}^2+l_{x_1} x_2\psi(u)-l_{x_2}x_1\psi(u)-l_u\right)\le0.
\end{equation}
Next, we define
\begin{align}
    F(u,x_1,x_2):=\frac{1}{2}|\nabla u|^{-2}+\frac{1}{2}l_{x_1}^2+\frac{1}{2}l_{x_2}^2+l_{x_1} x_2\psi(u)-l_{x_2}x_1\psi(u)-l_u.
\end{align} 
Another computation and the fact that $\tilde \nabla u$ is covariantly constant, yield that the only non-vanishing Riemann curvature terms of $\tilde g$ in the frame $\{\partial_1,\partial_2,\partial_\tau,\nabla u \}$ are given by
\begin{align}
    \tilde{R}(\nabla u,\partial_1,\partial_1,\nabla u)=& R(\nabla u,\partial_1,\partial_1,\nabla u)+k(\nabla u,\nabla u)k(\partial_1,\partial_1)-k^2(\nabla u,\partial_1)\\
    =&-|\nabla u|^4F_{x_1x_1},
    \\ \tilde{R}(\nabla u,\partial_2,\partial_2,\nabla u)=& R(\nabla u,\partial_1,\partial_1,\nabla  u)+k(\nabla u,\nabla u)k(\partial_2,\partial_2)-k^2(\nabla u,\partial_2)\\
    =&-|\nabla u|^4F_{x_2x_2},
    \\ \tilde{R}(\nabla u,\partial_1,\partial_2,\nabla u)=&-|\nabla u|^4F_{x_1x_2}.
\end{align}
According to Theorem 4.2 in \cite{HKK}, we have $|\nabla u|= 1+O_1(|x|^{-\tau})$.
Combining this with the asymptotics for $g$ and $k$ in \eqref{asymflat}, we obtain $F_{x_ix_j}=O(|x|^{-\tau-2})$, where $i,j=1,2$.
Therefore, we can follow the proof of Lemma \ref{F:linear} to conclude that $F$ is a linear function with respect to $x_1$, $x_2$.  Thus, $\tilde g$ is flat which finishes the proof.

\end{document}